\newcommand{\td}[2]{\xrightarrow[#1\rightarrow #2]{}}
\def\noi{\noindent}
\newtheorem{Thm}{Theorem}[section]
\newtheorem{Prop}[Thm]{Proposition}
\newtheorem{Lem}[Thm]{Lemma}
\newtheorem{Cor}[Thm]{Corollary}
\newtheorem{Fact}[Thm]{Fact}
\newtheorem{Def}[Thm]{Definition}
\numberwithin{equation}{section}
\newcommand{\g}[1]{{\mbox{\goth #1}}}
\newcommand{\m}[1]{\mathbb{ #1}}
     \def\ol{\overline}    
   \def\wt{\widetilde}
\def\al{\alpha}               
       \def\eps{\varepsilon}  
\def\th{\theta}       
\def\ph{\varphi}               \def\ps{\psi}
                  \def\Om{\Omega}
\theoremstyle{definition}
\theoremstyle{remark}
\newtheorem{Rmq}[Thm]{Remark}
\newtheorem{Ex}[Thm]{Example}
\numberwithin{equation}{section}
\newfont{\goth}{eufm10 at 12pt}
\newfont{\gots}{eufm8 at 9pt}
\def\bt{\begin{Thm}}
\def\et{\end{Thm}}
\def\br{\begin{Rmq}}
\def\er{\end{Rmq}}
\def\bc{\begin{Cor}}
\def\ec{\end{Cor}}
\def\bp{\begin{Prop}}
\def\ep{\end{Prop}}
\def\bl{\begin{Lem}}
\def\el{\end{Lem}}
\def\bd{\begin{Def}}
\def\ed{\end{Def}}
\def\bq{\begin{quotation}}
\def\eq{\end{quotation}}
\def\bfa{\begin{Fact}}
\def\efa{\end{Fact}}
\def\ra{\rightarrow}
\def\vs{\vspace{1em}}
\def\D{\displaystyle}
\begin{document}

\title[p-adic Lie groups]{How far are p-adic Lie groups from algebraic groups?}

\author{Yves Benoist and Jean-Fran\c cois Quint}

\address{CNRS -- Universit\'e Paris-Saclay}

\email{yves.benoist@math.u-psud.fr}

\address{CNRS -- Universit\'e Bordeaux}

\email{jquint@math.u-bordeaux1.fr}
\thanks{}

\maketitle


\begin{abstract}{
We show that, in a weakly regular $p$-adic Lie group $G$,
the subgroup $G_u$ spanned by the 
one-parame\-ter subgroups of $G$
admits a Levi decomposition.
As a consequence, 
there exists a regular open subgroup of $G$ which contains $G_u$}
\end{abstract}

\renewcommand{\thefootnote}{\fnsymbol{footnote}} 
\footnotetext{\emph{2020 Math. subject class.}   22E20~; Secondary 19C09} 
\footnotetext{\emph{Key words}  p-adic Lie group, algebraic group, unipotent subgroup, 
Levi decomposition,  central extension}     
\renewcommand{\thefootnote}{\arabic{footnote}} 


{\footnotesize \tableofcontents}


\section{Introduction}
\label{secintro}

\subsection{Motivations}
\label{secmot}

When studying the dynamics of the  subgroups 
of a $p$-adic Lie group $G$ on its homogeneous spaces,
various assumptions can be made on $G$. 
For instance, one can ask $G$ to be algebraic as in \cite{MaTo} 
i.e. to be a subgroup of a linear group defined by polynomial equations. 

Another possible assumption on $G$ is 
regularity.
One asks $G$ to satisfy properties that are well-known to hold 
for Zariski connected algebraic $p$-adic Lie groups: 
a uniform bound on the cardinality of the finite subgroups and 
a characterization of the center as the kernel of the adjoint representation
(see Definition \ref{defregsad}).
Ratner's theorems in \cite{Ra3} are written under this regularity assumption.

A  more natural and weaker assumption on $G$ in this context
is the weak regularity of $G$
i.e. the fact that the one-parameter morphisms of $G$ 
are uniquely determined by their derivative
(see Definition \ref{defweareg}).
Our paper \cite{BQII} is written under this weak-regularity assumption.

The aim of this text is to clarify the relationships between these three 
assumptions.

A key ingredient in the proof is  Proposition \ref{proprarag}. It claims\\
\centerline{\small{\it the finiteness of the center of 
the universal topological}}
\centerline{\small{\it central extension of non-discrete
simple $p$-adic Lie groups.}}

This is a fact which is due 
to Prasad and Raghunathan in \cite{PR84}.
\vs

\subsection{Main results}
\label{secmaires}
We will first prove (Proposition \ref{prolevdec}):
\bq
{\it In a weakly regular $p$-adic Lie group $G$, 
the subgroup $G_u$ spanned by the one-parameter subgroups
of $G$ is closed and  
admits a Levi decomposition},
\eq
i.e. $G_u$ is a semidirect product $G_u=S_u\ltimes R_u$ of a 
group $S_u$ by a normal subgroup $R_u$ where $S_u$ is a
finite cover  of a finite index subgroup of an algebraic semisimple
Lie group and where $R_u$ is algebraic unipotent.

As a consequence, we will prove (Theorem \ref{thwearegreg}):
\bq
{\it In a weakly regular $p$-adic Lie group $G$ 
there always exists an open subgroup $G_\Om$ which is regular and 
contains  $G_u$.}  
\eq
This Theorem \ref{thwearegreg}
is useful since it  extends the level of generality of Ratner's theorems in \cite{Ra3}
(see \cite[Th 5.15]{BQII}).
More precisely, Ratner's  theorems 
for products of real and $p$-adic Lie groups in \cite{Ra3} 
are proven under the assumption that these $p$-adic Lie groups are {\it regular}. 
Ratner's theorems can be extended under the weaker assumption that
these $p$-adic Lie groups are
{\it weakly regular} thanks to our
Theorem \ref{thwearegreg}.

This Theorem \ref{thwearegreg} has been announced 
in \cite[Prop. 5.8]{BQII} and has been used in the same paper.

The strategy consists in proving first 
various statements for weakly regular 
$p$-adic Lie groups which were proven in \cite{Ra3}
under the regularity assumption.
To clarify the discussion we will reprove also
the results from \cite{Ra3} that we need. 
But we will take for granted classical results 
on the structure of simple $p$-adic algebraic groups 
that can be found in \cite{BoTi65}, \cite{BT1}, \cite{BT2} or \cite{PlRa94}.   

\subsection{Plan}
\label{secplan}

In the preliminary Chapter \ref{secpad}  we recall
a few definitions and examples. 

Our main task  in Chapters \ref{secalguni} and 
\ref{secderone} is to describe, for a weakly regular $p$-adic Lie group
$G$, the subset 
$\g g_G\subset \g g$ of   derivatives of  one-parameter morphisms of $G$.

In Chapter \ref{secalguni}, the results are mainly due to Ratner.
We first study the nilpotent $p$-adic Lie subgroups $N$ of $G$ 
spanned by one-parameter subgroups. We will see that they satisfy
$\g n\subset \g g_G$ (Proposition \ref{prongggn}). 
Those $p$-adic Lie groups $N$ are called algebraic unipotent.
We will simultaneously compare this set $\g g_G$ with the analogous set $\g g'_{G'}$ 
for a quotient group $G'=G/N$ of $G$ when $N$ is a normal 
algebraic unipotent subgroup (Lemma \ref{lemngq}).
This will allow us to prove that $G$ contains a largest normal
algebraic unipotent subgroup (Proposition \ref{proruru}).

In Chapter 
\ref{secderone}, we will then be able to describe precisely the set $\g g_G$
using a Levi decomposition of $\g g$
(Proposition \ref{proxxsxr}).
A key ingredient is a technic, borrowed from \cite{BQII}, for
constructing  one-parameter subgroups 
in a $p$-adic Lie group $G$
(Lemma \ref{lemconstrunipo}).

In the last Chapter \ref{secgrospauni}, 
we will prove the main results we have just stated: 
Proposition \ref{prolevdec}
and Theorem \ref{thwearegreg},
using Prasad--Raghunathan finiteness theorem in \cite{PR84}
(see Proposition \ref{proprarag}).
We will end this text by an example showing that,
when a $p$-adic Lie group $G$ with $G=G_u$ 
is not assumed to be weakly  regular, 
it does not always contain 
a regular open subgroup $H$ for which $H=H_u$
(Example \ref{exagguhhu}).

\section{Preliminary results}
\label{secpad}

\bq
We recall here a few definitions and results from 
\cite{Ra3}.
\eq

Let $p$ be a prime number and $\m Q_p$ be the field of $p$-adic numbers.
When $G$ is a $p$-adic Lie group (see \cite{Bou1}),
we will always denote
by $\g g$ the Lie algebra of $G$. It is a $\m Q_p$-vector space.
We will denote by ${\rm Ad}_{\g g}$ or ${\rm Ad}$ 
the adjoint action of  $G$ on $\g g$
and by ${\rm ad}_{\g g}$ or ${\rm ad}$ 
the adjoint action of  $\g g$ on $\g g$.
Any closed subgroup $H$ of $G$ is a $p$-adic Lie subgroup 
and its Lie algebra
$\g h$ is a $\m Q_p$-vector subspace of $\g g$ (see \cite[Prop. 1.5]{Ra3}).

We choose a ultrametric norm $\|.\|$ on  $\g g$ with values in $p^{\m Z}$.

\subsection{One-parameter subgroups}
\label{seconepar}
\bd
\label{defonepar}
A {\it one-parameter morphism} $\ph$ of a $p$-adic Lie group $G$ is a continuous morphism
$\ph:\m Q_p\ra G;t\mapsto \ph(t)$.
A {\it one-parameter} subgroup  is the image $\ph(\m Q_p)$ of an injective
one-parameter morphism. 
\ed
The derivative of a one-parameter morphism is 
an element $X$ of $\g g$ for which ${\rm ad}X$ is nilpotent. 
This follows from the following Lemma from \cite[Corollary 1.2]{Ra3}.

\bl
\label{lemexpgl}
Let $\ph:\m Q_p\ra {\rm GL}(d,\m Q_p)$ be a one-parameter morphism. 
Then there exists a nilpotent matrix $X$ in ${\g g\g l}(d,\m Q_p)$
such that $\ph(t)={\rm exp}(t X)$ for all $t$ in $\m Q_p$.
\el

Here ${\rm exp}$ is the exponential of matrices~:\; 
${\rm exp}(t X):=\sum_{n\geq 0}\tfrac{t^nX^n}{n!}$

\begin{proof} First, we claim that, if $K$ is a finite extension of $\mathbb Q_p$, any continuous 
one-parameter morphism $\psi:\mathbb Q_p\rightarrow K^*$ is constant. Indeed, 
since the modulus of $\psi$ is a continuous morphism form $\mathbb Q_p$ to a discrete multiplicative 
subgroup of $(0,\infty)$, the kernel of $|\psi|$ contains $p^k\mathbb Z_p$, for some integer $k$. 
Since $\mathbb Q_p/p^k\mathbb Z_p$ is a torsion group and $(0,\infty)$ has no torsion, $\psi$ has constant modulus,
that is, $\psi$ takes values in $\mathcal O^*$, where $\mathcal O$ is the integer ring of $K$. Now, on one hand, $\mathcal O^*$ is a profinite group, that is, 
it is a compact totally discontinuous group and hence it admits a basis of neighborhoods of the identity which are finite index subgroups (namely, for example, the subgroups $1+p^k\mathcal O$, $k\geq 0$). 
On the other hand, every closed subgroup of $\mathbb Q_p$ is of the form $p^k\mathbb Z_p$, for some integer $k$, and hence, has infinite index in $\mathbb Q_p$ and therefore any 
continuous morphism from $\mathbb Q_p$ to a finite group is trivial. Thus, $\psi$ is constant, which should be proved.

Let now $\varphi$ be as in the lemma and let
$X\in {\g g\g l}(d,\m Q_p)$ be the derivative of $\ph$.
After having simultaneously reduced the commutative family of matrices $\ph(t)_{t\in \m Q_p}$,
the joint eigenvalues give continuous morphisms $\m Q_p\ra K^*$ 
where $K$ is a finite extension of $\m Q_p$. By the remark above, these morphisms are constant, that is, 
there exists $g$ in ${\rm GL}(d,\mathbb Q_p)$, such that, for any $t$ in $\mathbb Q_p$, the matrix $g\varphi(t)g^{-1}$ is 
unipotent and upper triangular. We may assume $g=1$.
Then $X$ is  nilpotent and upper triangular and it remains to check that the map 
$\th:t\mapsto\ph(t){\rm exp}(-tX)$ is constant.
Since $\ph(t)$ commutes with $X$, this map $\th$ 
is a one-parameter morphism with zero derivative. 
Hence, the kernel of $\psi$ is an open subgroup and the matrices $\th(t)$, $t\in\mathbb Q_p$, have finite order.
Since they are unipotent, they equal  $e$, which should be proved.
\end{proof}

\subsection{Weakly regular and regular $p$-adic Lie groups}
\label{secnot}

\begin{Def}[Ratner, \cite{Ra3}]
\label{defweareg}
A $p$-adic Lie group $G$ is said to be {\it weakly regular}
if any two one-parameter morphisms $\m Q_p\ra G$
with the same derivative  are equal.
\end{Def}

Note that any closed subgroup of a weakly regular $p$-adic Lie group
is also weakly regular.

\begin{Ex}[{\cite[Cor. 1.3 and Prop. 1.5]{Ra3}}]
Every closed subgroup
of 
$ {\rm GL}(d,\m Q_p)$ is
weakly regular.
\end{Ex}

\begin{proof}
This follows from Lemma \ref{lemexpgl}.
\end{proof}

\begin{Def}[Ratner, \cite{Ra3}]
\label{defregsad}
A $p$-adic Lie group $G$ is said to be {\rm Ad}-{\it regular}
if the kernel of the adjoint map ${\rm Ker}({\rm Ad}_\g g)$
is equal to the center $Z(G)$ of $G$. 
It is said to be {\it regular} if it is {\rm Ad}-{\it regular} and if the
finite subgroups of $G$ have uniformly bounded cardinality.
\end{Def}

Note that any open subgroup of a regular $p$-adic Lie group
is also regular.

This definition is motivated by the following example.

\begin{Ex}
\label{exalinreg}
$a)$ The finite subgroups of a compact $p$-adic Lie group $K$
have uniformly bounded cardinality.

$b)$ The finite subgroups of a $p$-adic linear group 
have uniformly bounded cardinality.

$c)$ The Zariski connected linear algebraic $p$-adic Lie groups $G$ are
regular.
\end{Ex}

\begin{proof}[Proof of Example \ref{exalinreg}] (see \cite{Ra3})

$a)$ Since $K$ contains a torsion free open normal subgroup
$\Om$, for every finite subgroup $F$ of $K$, 
one has the bound $|F|\leq |K/\Om|$.

$b)$ We want to bound the cardinality $|F|$ of a finite subgroup of
a group $G\subset {\rm GL}(d,\m Q_p)$. This follows from $a)$ since 
$F$ is included in a conjugate of the compact group 
$K= {\rm GL}(d,\m Z_p)$. 

$c)$ It remains to check that $G$ is {\rm Ad}-regular. 
Let $g$ be an element in the kernel of the adjoint map ${\rm Ad}_{\g g}$.
This means that the centralizer $Z_g$ of $g$ in $G$ is an open subgroup of $G$.
This group $Z_g$ is also Zariski closed. Hence it is Zariski open. 
Since $G$ is Zariski connected, one deduces $Z_g=G$ and 
$g$ belongs to the center of $G$.
\end{proof}

We want to relate the  two notions ``weakly regular'' and ``regular''.
We first recall the following implication in \cite[Cor. 1.3]{Ra3}.
\bl
\label{lemregweareg}
Any
regular $p$-adic Lie group is weakly regular. 
\el

\begin{proof} 
Let $\ph_1:\m Q_p\ra G$ and $\ph_2:\m Q_p\ra G$ 
be one-parameter morphisms of $G$ with the same derivative. 
We want to prove that $\ph_1=\ph_2$. 
According to Lemma \ref{lemexpgl}, the one-parameter morphisms 
${\rm Ad}_{\g g}\ph_1$ and ${\rm Ad}_{\g g}\ph_2$ are equal. 
Since $G$ is ${\rm Ad}$-regular, this implies that, for all $t$ in $\m Q_p$, 
the element $\th(t):=\ph_1(t)^{-1}\ph_2(t)$ is in the center of $G$. 
Hence $\th$ is a one-parameter morphism of the center of $G$ 
with zero derivative. Its image is either trivial or an infinite $p$-torsion group.
This second case is excluded by the uniform bound on the finite subgroups of $G$.
This proves the equality  $\ph_1=\ph_2$.
\end{proof}

The aim of this text is to prove  Theorem 
\ref{thwearegreg} which is a kind of converse to Lemma \ref{lemregweareg}.

\section{Algebraic unipotent $p$-adic Lie group}
\label{secalguni}
\bq
In this chapter, we study the algebraic unipotent subgroups
of a weakly regular $p$-adic Lie group. 
The results in this chapter are mainly due to Ratner.
\eq
 
\subsection{Definition and closedness}
\label{secdefclo}
\bq 
We first focus on a special class of $p$-adic Lie groups.
\eq

We say that an element $g$ of a $p$-adic Lie group $G$
admits a logarithm if one has $g^{p^n}\td{n}{\infty}e$: indeed,
for such a $g$, the map $n\mapsto g^n$ extends as a continuous
morphism $\m Z_p\ra G$ and one can define the logarithm $\log (g)\in \g g$ 
as being the derivative at $0$ of this morphism. 

\bd
\label{defuni}
A $p$-adic Lie group $N$ is called algebraic unipotent
if its Lie algebra is nilpotent,
if every element  $g$ in $N$ admits a logarithm $\log (g)$, and if  
the logarithm map $\log:N\ra\g n$ is a bijection.
\ed

This implies that every non trivial element $g$ of $N$ 
belongs to a unique one-parameter subgroup of $N$.
By definition these groups $N$ are weakly regular.
The inverse of the map $\log$ is denoted by $\exp$.
These maps $\exp$ and $\log$ are ${\rm Aut}(N)$-equivariant.

The following lemma is  in \cite[Prop. 2.1]{Ra3}.

\bl
\label{lemuni}
Let $N$ be an algebraic unipotent $p$-adic Lie group. Then the map
$$
\g n\times\g n\ra \g n;
(X,Y)\mapsto \log(\exp X\exp Y)
$$
is polynomial and is  given by the Baker-Campbell-Hausdorff formula.
In other words, a
 $p$-adic Lie group $N$
is algebraic unipotent if and only if
it is isomorphic to the group of $\mathbb Q_p$-points of
a unipotent algebraic group defined over $\m Q_p$.
\el

These groups have been studied in \cite[Sect.2]{Ra3} 
(where they are called {\it quasiconnected}).

In particular, we have

\bc Let $N$ be an algebraic unipotent $p$-adic Lie group. Then the exponential and logarithm maps of $N$ are continuous.
\ec

\begin{proof}[Proof of Lemma \ref{lemuni}]
In this proof, we will say that a $p$-adic Lie group is {\it strongly
algebraic unipotent} if it is 
isomorphic to the group of $\mathbb Q_p$-points of
a unipotent algebraic group defined over $\m Q_p$.
Such a group is always algebraic unipotent.

The aim of this proof is to check the converse. 
Let $N$ be an algebraic unipotent $p$-adic Lie group.
We want to prove that $N$ is strongly algebraic unipotent. 
Its Lie algebra $\g n$ contains a flag 
$0\subset \g n_1\subset\cdots\subset\g n_r=\g n$ of ideals 
with $\dim \g n_i=i$.
We will prove, by induction on $i\geq 1$, 
that the set $N_i:=\exp(\g n_i)$ is a closed subgroup of $N$
which is strongly algebraic unipotent. 

By the induction assumption, the  set $N_{i-1}$ is
a  strongly algebraic unipotent closed subgroup. Since $\g n_{i-1}$ is an ideal, this subgroup $N_{i-1}$ is normal.
Let $X_i$ be an element of $\g n_i\smallsetminus \g n_{i-1}$ and 
$N'_i$ the  semidirect product
$N'_i:=\m Q_p\ltimes N_{i-1}$ where the action of 
$t\in \m Q_p$ by conjugation on $N_{i-1}$ is given by
$t\exp(X)t^{-1}=\exp(e^{t{\rm ad}X_i}X)$, for all $X$ in $\g n_{i-1}$. 
By construction this group $N'_i$ is strongly algebraic unipotent and
the map $\psi:N'_i\ra N;(t,n)\mapsto \exp(t X_i)n$
is a group morphism. 
Since $N'_i=\exp(\g n'_i)$, the set $N_i=\exp(\g n_i)$
is equal to the image
$\psi(N'_i)$. Hence, by Lemma \ref{lemunipro} below, the set $N_i$ 
is a closed subgroup which  is isomorphic to $N'_i$ and hence $N_i$ is strongly algebraic unipotent. 
\end{proof}

The following lemma tells us that an algebraic unipotent 
Lie subgroup of a $p$-adic Lie group is always closed.

\bl
\label{lemunipro}
Let $G$ be a totally discontinuous locally compact topological group, $N$ be an algebraic unipotent 
$p$-adic Lie group and 
$\ph:N\ra G$ be an injective morphism.
Then $\ph$ is a proper map. In particular, $\ph(N)$ is a closed subgroup of $G$
and $\ph$ is is an isomorphism of topological groups from $N$ onto 
$\ph(N)$. 
\el

\begin{proof}
If $\ph$ was not proper, there would exist a sequence $Y_n$ in 
the Lie algebra $\g n$ of $N$ such that 
$$\D\lim_{n\ra\infty}\ph(\exp(Y_n))=e
\;\;{\rm and}\;\;
\D\lim_{n\ra\infty}\|Y_n\|=\infty.
$$ 
We write $Y_n=p^{-k_n}X_n$ with integers $k_n$ going to $\infty$
and $\|X_n\| = 1$. 
Since the group $G$ admits a basis of compact open subgroups, one also has
$\D\lim_{n\ra\infty}\ph(\exp(X_n))=e$. 
Let $X$ be a cluster point of the sequence $X_n$. 
One has simultaneously, $\ph(\exp(X))=e$ and $\| X\|=1$. 
This contradicts the injectivity of $\ph$.
\end{proof}

\subsection{Lifting one-parameter morphisms}
\label{seclifmor}
\bq
We now explain how to lift one-parameter morphisms.
\eq
\bl
\label{lemlifmor}
Let $G$ be a $p$-adic Lie group, $N\subset G$ 
a normal algebraic unipotent closed subgroup, $G':=G/N$, and 
$\pi:G\ra G'$ the projection. Then, for any 
one-parameter morphism $\ph'$ of $G'$,
there exists a one-parameter morphism $\ph$ of $ G$
which lifts $\ph'$, i.e. such that $\ph'=\pi\circ\ph$.

If $\ph'$ has zero derivative, one can choose $\ph$ to have zero derivative.
\el

\begin{proof}
According to Lemma \ref{lemunipro} the image $\ph'(\m Q_p)$ is a closed subgroup
of $G'$. Hence we can assume that $\ph'(\m Q_p)=G'$.
\vs

\noi{\bf First Case}~: $N$ is central in $G$. For $k\geq 1$, we introduce 
the subgroup $Q'_k$ of $G'$ spanned by the element 
$g'_k:=\ph'(p^{-k})$. 
Since $Q'_k$ is cyclic and $N$ is central, the group  $Q_k:=\pi^{-1}(Q'_k)$
is abelian. Since the increasing union of these groups $Q_k$ is dense in $G$,
the group $G$ is also abelian.
Since $N$ is infinitely $p$-divisible, one can construct, by induction on $k\geq 0$,
a sequence $(g_k)_{k\geq 0}$ in $G$ such that 
$$
\pi(g_k)=g'_k
\;\;{\rm and}\;\;
p\, g_{k+1}=g_k\; .
$$

We claim that $g_0^{p^k}\td{k}{\infty}e$. Indeed, since $\pi(g_0)^{p^k}\td{k}{\infty}e$ and since every element $h$ of a $p$-adic Lie group that is close enough to the identity element satisfies $h^{p^k}\td{k}{\infty}e$, one can find $\ell\geq 0$ and $n$ in $N$ such that
$(g_0^{p^\ell}n^{-1})^{p^k}\td{k}{\infty}e$. As $N$ is algebraic unipotent, we have $n^{p^k}\td{k}{\infty}e$, and the claim follows, since $N$ is central.

Now, the formulae $\ph(p^{-k})=g_k$, for all $k\geq 0$, define a unique
one-parameter morphism $\ph$ of $G$ which lifts $\ph'$.
  
Note that when $\ph'$ has zero derivative, one can assume, after a repara\-metrization
of $\ph'$, 
that $\ph'(\m Z_p)=0$ and choose the sequence $g_k$ so that $g_0=e$. 
Then the morphism $\ph$  has  also zero derivative. 
\vs

\noi{\bf General Case} ~:
The composition of $\ph'$ with the action by conjugation on the 
abelianized group $N/[N,N]\simeq \m Q_p^d$ is a one-parameter morphism
$\psi:\m Q_p\ra {\rm GL}(d,\m Q_p)$. According to 
Lemma \ref{lemexpgl}, there exists  a nilpotent matrix $X$ such that 
$\psi(t)=\exp(tX)$ for all $t\in \m Q_p$. 
The image of this matrix $X$ corresponds to an algebraic unipotent subgroup $N_1$ with
$[N,N]\subset N_1\subsetneq N$ which is normal in $G$ 
and such that $N/N_1$ is a central subgroup of the group $G/N_1$. 
According to the first case, the morphism $\ph'$
can be lifted as a morphism $\ph'_1$ of $ G/N_1$.
By an induction argument on the dimension of $N$, 
this morphism $\ph'_1$ can be lifted as a morphism of $G$.
\end{proof}

Let $G$ be a  $p$-adic Lie group.
We recall the notation 
\begin{equation}
\label{eqngg}
\g g_G:=\{ X\in \g g\;\;
\mbox{\rm  derivative of a one-parameter morphism of $G$}\}.
\end{equation}

Note that this set $\g g_G$ is invariant under the adjoint action of $G$.

The following lemma tells us 
various stability properties by extension 
when the normal subgroup is algebraic unipotent.

\bl
\label{lemngq}
Let $G$ be a $p$-adic Lie group, $N$ a  normal 
algebraic unipotent subgroup of $G$, and $G':=G/N$.\\
$a)$ One has the equivalence\\
\centerline{$G$ is algebraic unipotent $\Longleftrightarrow$
$G'$ is algebraic unipotent.}
$b)$ Let $X$ in $\g g$ and $X'$ its image in $\g g'=\g g/\g n$.
One has the equivalence\\
\centerline{$X\in \g g_G \Longleftrightarrow
X'\in \g g'_{G'}$.}
$c)$ One has the equivalence\\
\centerline{$G$ is weakly regular $\Longleftrightarrow$
$G'$ is weakly regular.}
\el   

Later on in Corollary \ref{corngq} we will be able to improve this Lemma.

\begin{proof} We denote by $\pi:G\ra G/N$ the natural projection.

$a)$ The implication $\Rightarrow$ is well-known. Conversely,
we assume that $N$ and $G/N$ are algebraic unipotent  
and we want to prove that $G$ is algebraic unipotent.
Arguing by induction on $\dim G/N$, we can assume 
$\dim G/N=1$, i.e. that  there exists an isomorphism
$\ph':\m Q_p\ra G/N$. 
According to Lemma \ref{lemlifmor}, one can find a one-parameter morphism
$\ph$ of $G$ that lifts $\ph'$. By Lemma \ref{lemunipro}, the image 
$Q:=\ph(\m Q_p)$ is closed and $G$ is the semidirect product
$G=Q\ltimes N$.  By Lemma \ref{lemexpgl},  
the one-parameter morphism $t\mapsto {\rm Ad}_{\g n}\ph(t)$ is  unipotent,
and hence the group $G$ is algebraic unipotent.  

$b)$ The implication $\Rightarrow$ is easy. Conversely,
we assume that $X'$ is the derivative of a one-parameter morphism $\ph'$
of $G'$.
When $X'=0$, the element $X$ belongs to $\g n$ and, since $N$ is algebraic unipotent, 
$X$ is
the derivative of a one-parameter morphism $\ph$
of $N$. We assume now that $X'\neq 0$ so that the group 
$Q':=\ph'(\m Q_p)$ is algebraic unipotent and isomorphic to $\m Q_p$.
According to point $a)$, the group $H:=\pi^{-1}(Q')$ is algebraic unipotent.
Since the element $X$ belongs to the Lie algebra $\g h$ of $H$,
it is
the derivative of a one-parameter morphism $\ph'$
of $H$.

$c)$ $\Rightarrow$ We  assume that $G$ is weakly regular. 
Let $\ph'_1$ and $\ph'_2$ be one-parameter morphisms of $G'$
with the same derivative $X'\in \g g'$. We want to prove that $\ph'_1=\ph'_2$.
If this derivative $X'$ is zero, by Lemma \ref{lemlifmor}, 
we can lift both $\ph'_1$ and $\ph'_2$
as one-parameter morphisms of $G$ with zero derivative.
Since $G$ is weakly regular, both $\ph_1$ and $\ph_2$ are trivial and $\ph'_1=\ph'_2$.
We assume now that the derivative $X'$ is non-zero. 
As above, for $i=1$, $2$, the groups $Q'_i:=\ph'_i(\m Q_p)$ and $H_i:=\pi^{-1}(Q'_i)$
are algebraic unipotent. 
Since $G$ is weakly regular, and its algebraic unipotent subgroups 
$H_1$ and $H_2$ have the same Lie algebra, one gets successively $H_1=H_2$, 
$Q'_1=Q'_2$, and $\ph'_1=\ph'_2$.

$\Leftarrow$ We assume that $G/N$ is weakly regular. 
Let $\ph_1$ and $\ph_2$ be one-parameter morphisms of $G$
with the same derivative $X\in \g g$. We want to prove that $\ph_1=\ph_2$.
Since $G/N$ is weakly regular the one-parameter morphisms
$\pi\circ\ph_1$ and $\pi\circ \ph_2$ are equal and their image $Q'$ 
is a unipotent algebraic subgroup of $G'$. According to point $a)$,
the group $H:=\pi^{-1}(Q')$ is algebraic unipotent. 
Since $\ph_1$ and $\ph_2$ take their values in $H$,
one has $\ph_1=\ph_2$.
\end{proof}

\subsection{Unipotent subgroups tangent to a nilpotent Lie algebra}
\label{secunitan}
\bq
Proposition \ref{prongggn} below describes the nilpotent Lie subgroups 
of a weakly regular  $p$-adic Lie group $G$ which are spanned by
one-parameter morphisms.
\eq
The following proposition is due to Ratner in \cite[Thm. 2.1]{Ra3}.
\bp
\label{prongggn} 
Let $G$ be a weakly regular $p$-adic Lie group and $\g n\subset \g g$ be
a nilpotent Lie subalgebra.
Then the set $\g n_G:=\g n\cap\g g_G$ is an ideal of $\g n$
and there exists an algebraic unipotent subgroup $N_G$ of $G$ 
with Lie algebra $\g n_G$. 
\ep 

This group $N_G$ is unique. It is a closed subgroup of $G$. By construction,
it is the largest algebraic unipotent subgroup whose Lie algebra 
is included in $\g n$. 

\begin{proof}
We argue by induction on $\dim \g n$. We can assume $\g n_G\neq 0$.
\vs

{\bf First case}~: $\g n$ is abelian. 
Let $X_1,\ldots ,X_r$ be a maximal family of linearly indepent elements
of $\g n_G$ and, for $i \leq r$, let  $\ph_i$ be the one-parameter
morphism with derivative $X_i$. Since $G$ is weakly regular, the
group spanned by the images  $\ph_i(\m Q_p)$ is commutative and 
the map
$$
\ps:\m Q_p^d\ra G; (t_1,\ldots,t_r)\mapsto \ph_1(t_1)\ldots\ph_r(t_r)
$$
is an injective morphism. Its image is a unipotent algebraic 
subgroup $N_G$ of $G$ whose Lie algebra is $\g n_G$.
\vs

{\bf Second case}~: $\g n$ is not abelian. 
Let $\g z$ be the center of $\g n$ and $\g z_2$ the ideal of $\g n$
such that $\g z_2/\g z$ is the center of $\g n/\g z$.
If $\g n_G$ is included in the centralizer $\g n'$ of $\g z_2$,
we can  apply the induction hypothesis to $\g n'$. 
We assume now that $\g n_G$ is not included in $\g n'$, i.e.
there exists 
$$
X\in \g n_G
\;{\rm and}\;
Y\in \g z_2\;
\mbox{\rm such that}\;
[X,Y]\neq 0.
$$ 
This element $Z:= [X,Y]$ belongs to the center $\g z$. 

We first 
check that $Z$ belongs also to $\g g_G$. 
Indeed, let $\g m$ be the $2$-dimensional Lie subalgebra of $\g n$
with basis $X,Z$. This Lie algebra is normalized by $Y$. 
For $\eps\in \m Q_p$ small enough, there exists a group morphism 
$\psi: \eps \m Z_p\ra G$ whose derivative at $0$ is $Y$, and one has 
${\rm Ad}(\psi(\eps))Y=e^{\eps\,{\rm ad}Y}X=X\!-\!\eps Z$.
Since $X$
belongs to $\g g_G$,  
the element $X\!-\!\eps Z$ also belongs to $\g g_G$.
By the first case applied to the abelian Lie subalgebra $\g m$, the 
element $Z$ belongs to $\g g_G$. 

This means that there exists a one-parameter subgroup $U$ of $G$ 
whose Lie algebra is $\g u=\m Q_pZ$. Let $C$ be the centralizer 
of $U$ in $G$. According to Lemma \ref{lemngq}, 
the quotient group $C/U$ is also weakly regular.
We apply our induction hypothesis to this group $C':=C/U$ and the 
nilpotent Lie algebra $\g n/\g u$. There exists a largest
algebraic unipotent subgroup $N'_{C'}$ in $C'$ whose Lie algebra 
is included in $\g n'$. Hence, using again Lemma \ref{lemngq}, 
there exists a largest
algebraic unipotent subgroup $N_{C}$ of $C$ whose Lie algebra 
is included in $\g n$. Since $G$ is weakly regular, any one-parameter
subgroup of $G$ tangent to $\g n$ is included in $C$ and 
$N_C$ is also the largest
algebraic unipotent subgroup of $G$ whose Lie algebra 
is included in $\g n$.
\end{proof}

\subsection{Largest normal algebraic unipotent subgroup}
\label{seclarnoruni}
\bq
We prove in this section that a weakly regular $p$-adic Lie group
contains a largest normal algebraic unipotent subgroup.
\eq
 
Let $G$ be a $p$-adic Lie group.
We denote by $\ol G_u$  the closure of the subgroup $G_u$ of $G$
generated by all the one-parameter 
subgroups of $G$. 
This group $\ol G_u$ is normal in $G$.
We denote by $\g g_u$ the Lie algebra of $\ol G_u$. 
It is an ideal of $\g g$.

We recall that the radical $\g r$ of $\g g$ 
is the largest solvable ideal of $\g g$ and that the 
nilradical $\g n$ of $\g g$ is the largest nilpotent ideal of $\g g$. The nilradical is the set of $X$ in $\g r$ such that ${\rm ad}X$ is nilpotent and one has 
$[\g g,\g r]\subset\g n$ (see \cite{Bou1}).

When $G$ is weakly regular, we denote by $R_u$ the largest algebraic unipotent subgroup
of $G$ whose Lie algebra is included in $\g n$. It exists
by Proposition \ref{prongggn}. 

The following proposition is mainly in \cite[Lem. 2.2]{Ra3}.

\bp
\label{proruru}
Let $G$ be a weakly regular $p$-adic Lie group.\\
$a)$ The group $R_u$ is the largest normal 
algebraic unipotent subgroup of $G$.\\
$b)$ Its Lie algebra $\g r_u$ is equal to 
$\g r_u=\g n\cap \g g_G=\g r\cap\g g_G$.\\
$c)$ One has the inclusion $[\g g_u,\g r]\subset \g r_u$.\\
$d)$ Let $G'=G/R_u$. 
Let $X$ be in $\g g$ and $X'$ be its image in $\g g'=\g g/\g r_u$.
One has the equivalence\\
\centerline{$X\in \g g_G \Longleftrightarrow
X'\in \g g'_{G'}$.}
\ep

\begin{proof}
a) We have to prove that any normal algebraic unipotent subgroup $U$
of $G$ is included in $R_u$. Indeed the Lie algebra $\g u$ of $U$ 
is a nilpotent ideal of $\g g$, hence it is included in $\g n$ and $U$
is included in $R_u$.

$b)$ We already know the equality $\g r_u=\g n\cap \g g_G$ from 
Proposition \ref{prongggn}. It remains to check the inclusion 
$\g r\cap \g g_G\subset \g n$. Indeed, let $X$ be an element in $\g r\cap\g g_G$.  
Since $X$ is the derivative of a one-parameter morphism, 
by Lemma \ref{lemexpgl}, the endomorphism ${\rm ad}X$ is nilpotent. 
Since $X$ is also in the radical $\g r$, $X$ has to be in the nilradical $\g n$. 

$c)$ We want to prove that the adjoint action of $G_u$ on the quotient 
Lie algebra $\g r/\g r_u$ is trivial. That is, we want to prove
that, for all
$$
X\in \g g_G
\;\;{\rm and}\;\;
Y\in \g r
\; ,\;\mbox{\rm one has}\;\;
[X,Y]\in \g r_u.
$$
By Lemma \ref{lemexpgl}, the endomorphism ${\rm ad}X$ is nilpotent.
Since $Y$ is in $\g r$, the bracket $[X,Y]$ belongs to $\g n$
and the vector space $\g m:=\m Q_p X\oplus \g n$ is a nilpotent Lie algebra
normalized by $Y$. Hence, by Proposition \ref{prongggn}, 
the set $\g m\cap \g g_G$ is a nilpotent Lie algebra. 
This set is normalized by $Y$ since it is invariant by $e^{\eps\,{\rm ad}Y}$
for $\eps\in \m Q_p$ small enough. In particular the element $[X,Y]$ belongs to $\g g_G$ and hence to $\g r_u$.

$d)$ This is a special case of Proposition \ref{prongggn}. 
\end{proof}

\section{Derivatives of one-parameter morphisms}
\label{secderone}
\bq
In this chapter, we describe the set $\g g_G$ of derivatives of one-parameter
morphisms of a weakly regular $p$-adic Lie group $G$.
\eq

\subsection{Construction of one-parameter subgroups}
\label{seccononepar}

\bq
We explain first a construction of one-parameter morphisms of $G$
borrowed from \cite{BQII} and \cite{BQLS}.
\eq

\begin{Lem}
\label{lemconstrunipo} 
Let $G$ be a $p$-adic Lie group and $g\in G$.
Then the vector space 
$\g g_g^+:=\{ v\in \g g\mid {\D\lim_{n\ra\infty}}{\rm Ad}g^{-n}v=0\}$
is included in $\g g_G$.
\end{Lem}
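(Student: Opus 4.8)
The plan is to realize $g$ as a matrix, use Jordan-type decomposition, and build the one-parameter morphism inside the ambient linear group, then check it lands in $G$. First I would pass to a faithful representation: by Ado's theorem (or rather its $p$-adic analogue, using that $G$ has a neighborhood of the identity modeled on $\g g$) one cannot embed $G$ itself into $\GL{d}{\m Q_p}$ in general, so instead I would work directly with the conjugation action $\Ad$ of $G$ on $\g g$. The subspace $\g g_g^+$ is by definition the "contracting subspace" of $\Ad g$, i.e. the sum of the generalized eigenspaces of $\Ad g$ (over an algebraic closure) whose eigenvalues have absolute value $<1$. The key classical fact I would invoke (this is exactly the technique "borrowed from \cite{BQII}") is that the contracting subspace under an automorphism coincides with the Lie algebra of a $p$-adic analytic subgroup, namely the \emph{contracting subgroup} $U_g^+ := \{h\in G \mid g^n h g^{-n}\td{n}{\infty} e\}$. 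One shows $U_g^+$ is a closed subgroup, that it is nilpotent (its Lie algebra $\g g_g^+$ is nilpotent because $\Ad$ restricted to it has all eigenvalues of absolute value $<1$, so in particular $\ad X$ is nilpotent for $X\in\g g_g^+$ — eigenvalues of $\ad X$ are differences of eigenvalues of $\Ad$, but more to the point one gets a filtration by the "rate of contraction"), and that every element of $U_g^+$ admits a logarithm since $h^{p^n}\to e$ is forced by the contraction together with $g$-conjugation pushing things into arbitrarily small neighborhoods.

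More concretely, here is how I would organize the steps. Step 1: show $U_g^+$ is a subgroup and that $g$-conjugation by large powers maps any fixed compact neighborhood of $e$ in $U_g^+$ into arbitrarily small neighborhoods; deduce every $h\in U_g^+$ satisfies $h^{p^n}\td{n}{\infty} e$, hence admits a logarithm $\log h\in\g g$. Step 2: identify $\g g_g^+$ with $\{\log h : h\in U_g^+\}$ and show $\log: U_g^+\to\g g_g^+$ is a bijection; the filtration of $\g g$ by eigenvalue-absolute-values gives $\g g_g^+$ a nilpotent Lie algebra structure, so by Lemma \ref{lemuni} the group $U_g^+$ is algebraic unipotent. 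Step 3: once $U_g^+$ is algebraic unipotent with Lie algebra $\g g_g^+$, for any $v\in\g g_g^+$ the element $\exp(v)$ lies in $U_g^+$ and lies in a unique one-parameter subgroup of $U_g^+\subset G$, whose derivative is $v$; hence $v\in\g g_G$.

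The main obstacle — and the heart of the argument — is Step 2, proving that $\log$ is a bijection from $U_g^+$ onto exactly $\g g_g^+$ (not something larger or smaller) and that the resulting set is closed under the group law polynomially, i.e. that $U_g^+$ really is a $p$-adic analytic subgroup tangent to the linear-algebra contracting space rather than merely a closed subset. The containment $\log(U_g^+)\subseteq\g g_g^+$ follows by differentiating the relation $\Ad(g^n)\,\Ad(h)\,\Ad(g^{-n})\to\Id{}$; the reverse containment requires actually \emph{constructing} elements of $U_g^+$ with prescribed logarithm, which is where one uses a fixed-point / successive-approximation scheme (contracting dynamics guarantees convergence of the relevant iteration in $G$). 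I would set this up by choosing a small parameter $\eps\in\m Q_p$, a chart $\g g\supset V\to G$ near $e$, writing the desired one-parameter morphism as a limit $\ph(t) = \lim_n g^n\exp(p^n t\, v)g^{-n}$-type expression (rescaling to absorb the contraction), and checking the limit exists in $G$ and defines a continuous homomorphism $\m Q_p\to G$ with derivative $v$. Granting that convergence, the conclusion $\g g_g^+\subseteq\g g_G$ is immediate.
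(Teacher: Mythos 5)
Your overall plan is the paper's technique dressed in heavier clothing, but the one step you yourself identify as "the heart of the argument" is left at the level of a gesture, and the gesture as written does not work. Concretely: (a) there is a sign mismatch throughout — $\g g_g^+=\{v:{\rm Ad}g^{-n}v\to 0\}$ is the sum of generalized eigenspaces of ${\rm Ad}g$ with eigenvalue of modulus $>1$, i.e.\ the \emph{expanding} space of ${\rm Ad}g$, whereas your $U_g^+=\{h: g^nhg^{-n}\to e\}$ is the contraction group for conjugation by $g$, whose tangent space is the modulus $<1$ part; you would need $U_g^+=\{h:g^{-n}hg^n\to e\}$. (b) The proposed formula $\ph(t)=\lim_n g^n\exp(p^ntv)g^{-n}$ is wrong even after fixing the sign: conjugation by $g^n$ rescales $v$ by ${\rm Ad}g^{n}$, not by $p^{-n}$, so the expression converges to the identity or diverges unless $v$ happens to be an eigenvector with eigenvalue exactly $p^{-1}$. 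The correct expression is $\ph(t)=g^{n}\exp_\Om({\rm Ad}g^{-n}(tv))g^{-n}$ for $n$ large (depending on $t$), and the point is not a fixed-point or successive-approximation argument at all: one chooses a standard subgroup $\Om$ and a small additive neighborhood $U\subset\g g_g^+$ with ${\rm Ad}g^{-1}U\subset U$ on which the exact equivariance $\exp_\Om({\rm Ad}g\,u)=g\exp_\Om(u)g^{-1}$ holds, and then the expressions above are \emph{eventually constant} in $n$, so they glue to a well-defined map on $\g g_g^+=\bigcup_k{\rm Ad}g^k U$. Without that local equivariance statement (the paper's Lemma \ref{lemexpmap}) you have no mechanism forcing your limit to exist, and "granting that convergence" is precisely granting the lemma.

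Separately, the detour through showing that $U_g^+$ is a closed algebraic unipotent subgroup with Lie algebra exactly $\g g_g^+$ is both unnecessary and strictly harder than the statement to be proved. Unnecessary, because to put a single $v$ into $\g g_G$ you only need the one-parameter morphism $t\mapsto\ph(tv)$ along the line $\m Q_pv$, where Baker--Campbell--Hausdorff degenerates to additivity, so no nilpotency of $\g g_g^+$ and no appeal to Lemma \ref{lemuni} is needed. Harder, because surjectivity of $\log$ from the contraction group onto $\g g_g^+$ (equivalently, that the contraction group is a Lie subgroup with the full expected Lie algebra — a statement that is genuinely delicate for general locally compact groups) is normally \emph{deduced from} the construction above, not used to prove it. So the logical order in your Steps 1--3 is inverted: Step 3 needs Step 2, and Step 2 is the lemma.
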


Note that $\g g_g^+$ is a nilpotent Lie subalgebra of $\g g$.

The proof relies on the existence of compact open subgroups of $G$
for which the exponential map satisfies a nice equivariant property.
We need some classical definition (see \cite{DSMS}).
A $p$-adic Lie group $\Om$ is said
to be a {\it standard} group if there exists a $\m Q_p$-Lie algebra $\g l$
and a compact open sub-$\m Z_p$-algebra $O$ of $\g l$ such that
the Baker-Campbell-Hausdorff series converges 
on $O$ and $\Om$ is isomorphic
to the $p$-adic Lie group $O$ equipped with 
the group law defined by this formula.
In this case, $\g l$ identifies canonically with the Lie algebra of $\Om$, 
every element of $\Om$ admits a logarithm
and the logarithm map induces an isomorphism $\Om\ra O$. 
If $G$ is any $p$-adic Lie group,
it admits a standard open subgroup (see \cite[Theorem 8.29]{DSMS}).
If $\Omega$ is such a subgroup and if $O$ is the associated
compact open sub-$\m Z_p$-algebra  of $\g g$, we denote by
$\exp_\Omega:O\ra\Omega$ the inverse diffeomorphism 
of the logarithm map $\Omega\ra O$.

Note that if $\Omega$ and $\Omega'$ are standard open subgroups of $G$,
the maps $\exp_\Omega$ and $\exp_{\Omega'}$ 
coincide in some neighborhood of $0$ in $\g g$.

\bl
\label{lemexpmap}
Let $G$ be a $p$-adic Lie group, $\Om\subset G$ a standard open subgroup
and ${\rm exp}_\Om:O\ra \Om$ the corresponding exponential map.
For every compact subset $K\subset G$,
there exists an open subset $O_K\subset \g g$ which is contained in $O$ and in all
the translates ${\rm Ad}g^{-1}(O)$, $g\in K$, and
such that one has the equivariance property
\begin{equation*}\label{eqnexpmap}
{\rm exp}_\Om({\rm Ad}g(v))= g\, {\rm exp}_\Om(v)\, g^{-1}
\;\;\;
\text{for any $v\in O_K$, $g\in K$}.
\end{equation*}
\el

\begin{proof} We may assume that $K$ contains $e$.
The intersection $\Om_K:=\cap_{g\in K}g^{-1}\Om g$ 
is an open neighborhood of $e$ in $G$.
We just choose $O_K$ to be the open set $O_K:= \log(\Om_K)$.
\end{proof}

\begin{proof}[Proof of Lemma \ref{lemconstrunipo}] 
This is \cite[Lem. 5.4]{BQII}. For the sake of completeness, we recall the proof.
Fix $g\in G$. Let $\Omega$ be a standard open subgroup of $G$
with exponential map $\exp_\Omega:O\ra\Omega$.
By Lemma \ref{lemexpmap}, there exists an open additive subgroup
$U\subset O\cap \g g_g^+$ such that ${\rm Ad}g^{-1}U\subset O$ and that 
$$
\exp_\Omega(u)=g\exp_\Omega({\rm Ad}g^{-1}u)g^{-1}
\;\;\mbox{\rm for any $u$ in $U$}.
$$
After eventually replacing $U$ by $\bigcap_{k\geq 0}{\rm Ad}g^{k}U$, 
we can assume ${\rm Ad}g^{-1}U\subset U$.
Now, for $k\geq 0$, let $U_k:={\rm Ad}g^{k} U$
and define a continuous map $\ps_k:U_k\ra G$
by setting 
$$
\ps_k(u)=g^{k}\exp_{\Omega}({\rm Ad}g^{-k}u)g^{-k}
\;\;\mbox{\rm for any $u$ in $U_k$}.
$$ 
We claim that, for any $k$,
one has $\ps_k=\ps_{k-1}$ on $U_{k-1}={\rm Ad}g^{-1}U_k$. 
Indeed, let $u$ be in $U_{k}$.
As $u_k:= {\rm Ad}g^{-k}u$ belongs to $U$, we have
$$
\ps_{k+1}(u)=g^{k}(g\exp_\Omega({\rm Ad}g^{-1}u_k)g^{-1})g^{-k}=
g^{k}\exp_\Omega(u_k)g^{-k}=\ps_k(u).$$
Therefore, as $\g g_g^+=\bigcup_{k\geq 0}U_k$,
one gets a map $\ps:\g g_g^+\ra G'$ whose restriction to any $U_k$,
$k\geq 0$, is $\ps_k$.
For every $v$ in $\g g_g^+$, the map $t\mapsto \ps(tv)$ is a one-parameter
morphism of $G$ whose derivative is equal to $v$.
\end{proof}

\subsection{The group $G_{nc}$ and its Lie algebra $\g g_{nc}$}
\label{secgrognc}
\bq
We introduce in this section a normal subgroup $G_{nc}$
of $G$ which  contains $G_u$.
\eq

Let $G$ be a $p$-adic Lie group and 
$\g g_{nc}$ be the smallest ideal of the Lie algebra $\g g$ 
such that the Lie algebra $\g s_c:=\g g/\g g_{nc}$ is semisimple
and such that the adjoint group 
${\rm Ad}_{\g s_c}(G)$ is bounded in the group 
${\rm Aut}(\g s_c)$ of automorphisms of $\g s_c$.
Let $G_{nc}$ be the kernel of the adjoint action in $\g s_c$, i.e.
$$
G_{nc}:=\{ g\in G\mid 
\mbox{\rm for all $X$ in $\g g$}\, ,\;\;
{\rm Ad}g(X)-X\in \g g_{nc}\}
$$
By construction $G_{nc}$ is a closed normal subgroup of $G$ with Lie algebra $\g g_{nc}$.

\bl
\label{lemgnc}
Let $G$ be a $p$-adic Lie group. Any one-parameter morphism
of $G$ takes its values in $G_{nc}$
\el

In other words,
the group $G_u$ is included in $G_{nc}$.

\begin{proof}
Let $\ph$ be a one-parameter morphism of $G$. 
Then ${\rm Ad}_{\g s_c}\circ\ph$ is a one-parameter morphism of 
${\rm Aut}(\g s_c)$ whose image is relatively compact. 
By Lemma \ref{lemexpgl}, this one-parameter morphism is trivial 
and $\ph$ takes its values in $G_{nc}$.  
\end{proof}

\subsection{Derivatives and Levi subalgebras}
\label{secderlevsub}
\bq
We can now describe precisely which elements of $\g g$ 
are tangent to one-parameter subgroups of $G$.
\eq

We recall that an element $X$ of a semisimple Lie algebra $\g s$ 
is said to be nilpotent if the endomorphism ${\rm ad}_{\g s}X$ 
is nilpotent. In this case, for any finite dimensional 
representation $\rho$ of $\g s$, 
the endomorphism $\rho(X)$ is also nilpotent.

We recall that a Levi subalgebra $\g s$ of a Lie algebra $\g g$
is a maximal semisimple Lie subalgebra, 
and that one has the Levi decomposition  $\g g=\g s\oplus\g r$.

The following proposition is proven in \cite[Th. 2.2]{Ra3}
under the additional assumption that $G$ is ${\rm Ad}$-regular.

\bp
\label{proxxsxr}
Let $G$ be a weakly regular $p$-adic Lie group,
$\g r$ be the radical of $\g g$, $\g s$ a Levi subalgebra of $\g g$
and $\g s_u:=\g s\cap \g g_{nc}$. One has the equality~:
$\g g_G=\{ X\in \g s_u\oplus\g r_u \mid {\rm ad} X$ is nilpotent$\}.$
\ep

It will follow from Lemma \ref{lemxxsxr} that the Lie algebra 
$\g s_u\oplus\g r_u$ does not depend on the choice of $\g s$.

The key ingredient in the proof of Proposition \ref{proxxsxr} 
will be Lemma \ref{lemconstrunipo}.
We will begin by three preliminary lemmas. The first two lemmas are classical.

\bl
\label{lemeigval}
Let $V=\m Q_p^d$ and $G$ be a subgroup of ${\rm GL}(V)$ such that 
$V$ is an unbounded and irreducible reprentation of $G$. 
Then $G$ contains an element $g$ with at least one eigenvalue 
of modulus not one.
\el

\begin{proof}
Let $A$ be the associative subalgebra of ${\rm End}(V)$ spanned by $G$.
Since $V$ is irreducible, the associative algebra $A$ is semisimple 
and the bilinear form $(a,b)\mapsto tr(ab)$ is non-degenerate on $A$
(see \cite[Ch. 17]{La65}). 
If all the eigenvalues of all the elements of $G$ have modulus $1$,
this bilinear form is bounded on $G\times G$. 
Since $A$ admits a basis included in $G$,
for any $a$ in $A$, the linear forms $b\mapsto tr(ab)$ on $A$ are bounded on 
the subset $G$. Hence  $G$ is a bounded
subset of $A$. 
\end{proof}

\bl
\label{lemauts0}
Let $\g s_0$ be a simple Lie algebra over $\m Q_p$ and $S_0:={\rm Aut}(\g s_0)$.
All open unbounded subgroups $J$ of $S_0$ have finite index in $S_0$.
\el


\begin{proof}
Since $J$ is unbounded, by Lemma \ref{lemeigval}, it contains an element $g_0$ 
with at least one eigenvalue 
of modulus not one.
Since $J$ is open, the unipotent Lie subgroups
\begin{eqnarray*}
U^+\!=\!\{ g\in S_0\mid \lim\limits_{n\ra\infty}g_0^{-n}gg_0^n=e\}
\;{\rm and}\;
U^-\!=\!\{ g\in S_0\mid \lim\limits_{n\ra\infty}g_0^{n}gg_0^{-n}=e\}
\end{eqnarray*}
are included in $J$.
By \cite[6.2.$v$ and 6.13]{BoTi73}, $J$ 
has finite index in $S_0$.
\end{proof}

The third lemma  contains the key ingredient.

\bl
\label{lemxxsxr}
Let $G$ be a weakly regular $p$-adic Lie group,
$\g r$ the radical of $\g g$, $\g s$ a Levi subalgebra of $\g g$
and $\g s_u:=\g s\cap \g g_{nc}$.\\
$a)$ One has the inclusion $[\g s_u,\g r]\subset \g r_u$.\\ 
$b)$ Every nilpotent element $X$ in $\g s_u$ belongs to $\g g_G$.
\el

Note that Lemma \ref{lemxxsxr}.$a$ does not follow 
from Proposition \ref{proruru}.$c$, since with the definitions of $\g g_u$
and $\g s_u$ that we have given, we do not know yet that $\g s_u=\g s\cap \g g_u$.

\begin{proof}
a) By Proposition \ref{proruru}, we can assume $\g r_u=0$.
We want to prove that $[\g s_u,\g r]=0$. 
Let $\g s_i$, $i=1,\ldots,\ell$ be the simple ideals of $\g s_u$.
Replacing $G$ by a finite index subgroup, 
we can also assume that the ideals $\g s_i\oplus \g r$ are $G$-invariant. 
Similarly, let $\g r_j$, $j=1,\ldots, m$ be the simple subquotients
of a Jordan-H\"{o}lder sequence of the $G$-module $\g r$.

On the one hand, by assumption, 
for all $i\leq \ell$, the group 
${\rm Ad}_{\g s_i\oplus\g r/\g r}(G)$ is unbounded.
Hence, by Lemma \ref{lemeigval}, there exists an element $g_i$ in $G$
and $X_i$ in $\g g_{g_i}^+\cap(\g s_i\oplus\g r)$ 
whose image in $\g g/\g r$ is  non zero. 
By Lemma \ref{lemconstrunipo}, there exists a 
one-parameter morphism $\ph_i$ of $G$ whose derivative is $X_i$.  

On the other hand, since $\g r_u= 0$, 
by the same Lemma \ref{lemconstrunipo}, 
for every $g$ in $G$, all the eigenvalues of ${\rm Ad}_{\g r}(g)$
have modulus $1$.
By Lemma \ref{lemeigval}, for all $j\leq m$, 
the image ${\rm Ad}_{\g r_j}(G)$ 
of $G$ in any simple 
subquotient $\g r_j$ is bounded. 
In particular, the one-parameter morphisms
${\rm Ad}_{\g r_j}\circ\ph_i$ are bounded. 
Hence, by Lemma \ref{lemexpgl}, one has ${\rm ad}_{\g r_j}(X_i)=0$.
Since
$\g r_j$ is a simple $\g g$-module, the Lie algebra 
${\rm ad}_{\g r_j}(\g g)$ is reductive and 
contains ${\rm ad}_{\g r_j}(\g s_i)$
as an ideal.
Since $\g s_i$ is a simple Lie algebra, this implies  ${\rm ad}_{\g r_j}(\g s_i)=0$.
Since the action of $\g s_u$ on $\g r$ is semisimple, this implies
the equality $[\g s_u,\g r]= 0$.

$b)$ 
As in $a)$, we can assume that $G$
preserves the  ideals $\g s_i\oplus \g r$ and that $\g r_u= 0$.
According to this point $a)$, the Lie algebras $\g s_u$ and $\g r$ 
commute and hence $\g s_u$ is the unique Levi subalgebra of $\g g_{nc}$
(see \cite[\S 6]{Bou1}).
In particular, 
\begin{equation}
\label{eqnadgss}
\mbox{\rm for all $g$ in $G$, one has ${\rm Ad}g(\g s_u)=\g s_u$}.
\end{equation}
Let $X$ be a nilpotent element of $\g s_u$. We want to prove that
$X$ is the derivative of a one parameter morphism of $G$.
By Jacobson-Morozov theorem, there exists an automorphism $\psi$ of 
$\g s_u$ such that $\psi(X)= p^{-1} X$. 
Since, for every simple ideal $\g s_i$ of $\g s$, 
the subgroup ${\rm Ad}_{\g s_i\oplus\g r/\g r}(G_{nc})\subset {\rm Aut}(\g s_i)$ 
is unbounded and open, 
this subgroup has finite index. 
Hence, remembering also \eqref{eqnadgss}, 
there exists $k\geq 1$ and $g$ in $G_{nc}$ such that 
${\rm Ad} g(X)=p^{-k}X$.
Then, by Lemma \ref{lemconstrunipo}, $X$ is the derivative
of a one-parameter morphism of $G$.  
\end{proof}

\begin{proof}[Proof of Proposition \ref{proxxsxr}] 
We just have to gather what we have proved so far. 
By Proposition \ref{proruru}, we can assume $\g r_u=0$.
Let $X\in \g g$. We write
$X=X_{\g s}+X_{\g r}$ with
$X_{\g s}\in \g s$ and $X_{\g r}\in \g r$.

Proof of the inclusion $\subset$. Assume that $X$ is the derivative of a one-parameter
morphism $\ph$ of $G$. 
By Lemma \ref{lemgnc}, 
$X$ belongs to $\g g_{nc}$
and hence $X_{\g s}$ belongs to $\g s_u$. 
By Lemma \ref{lemexpgl}, the endomorphism ${\rm ad}_{\g g}X$ is nilpotent,
and hence $X_{\g s}$ is a nilpotent element of the semisimple Lie algebra $\g s$. 
According to Lemma \ref{lemxxsxr}, 
$X_{\g s}$ and $X_{\g r}$ commute and $X_{\g s}$ is the derivative of a 
one-parameter morphism $\ph_{\g s}$ of $G$. Then 
$X_{\g r}$ is also the derivative of a 
one-parameter morphism $\ph_{\g r}$ of $G$, the one given by 
$t\mapsto \ph_{\g s}(t)^{-1}\ph(t)$. 
Hence $X_{\g r}$ belongs to $\g r_u$.

Proof of the inclusion $\supset$.
Assume that $X_\g s$ belongs to $\g s_u$,  $X_\g r$ belongs to $\g r_u$
and ${\rm ad}X$ is nilpotent.
By Lemma \ref{lemxxsxr},
$X_{\g s}$ and $X_{\g r}$ commute and $X_{\g s}$ is the derivative of a 
one-parameter morphism $\ph_{\g s}$ of $G$. By assumption 
$X_{\g r}$ is the derivative of a 
one-parameter morphism $\ph_{\g r}$ of $G$. 
Hence $X$  is also the derivative of a 
one-parameter morphism $\ph$ of $G$,
the one given by 
$t\mapsto \ph_{\g s}(t)\ph_{\g r}(t)$. 
Hence $X$ belongs to $\g g_G$.
\end{proof}

\section{Groups spanned by unipotent subgroups}
\label{secgrospauni}

\bq
In this chapter, we prove the two main results 
Proposition \ref{prolevdec} and Theorem \ref{thwearegreg}
that we announced in the introduction.
\eq

\subsection{Semisimple regular $p$-adic Lie groups}
\label{secsemreg}
\bq
We recall first a nice result due to Prasad-Raghunathan 
which is an output from the theory of congruence subgroups
\eq

Let $\g s$ be a semisimple Lie algebra over $\m Q_p$,
${\rm Aut}(\g s)$ the group of automorphisms of $\g s$
and $S_+:={\rm Aut}(\g s)_u\subset {\rm Aut}(\g s)$ the subgroup spanned 
by the one-parameter subgroups of ${\rm Aut}(\g s)$. 
We will say that $\g s$ is {\it totally isotropic}
if $\g s$ is spanned by nilpotent elements.
In this case $S_+$ is an open finite index  subgroup of ${\rm Aut}(\g s)$,
see \cite[6.14]{BoTi73}.
Since $\g s=[\g s,\g s]$, the group  $S_+$ is perfect i.e. $S_+=[S_+,S_+]$.
In particular this group admits a universal central topological extension
$\wt S_+$, i.e. a group which is universal among 
the central topological extension of $S_+$.
In \cite{PR84} Prasad-Raghunathan were able to describe this group
$\wt S_+$
(special cases were obtained before by Moore, Matsumoto and Deodhar).
We will only need here the fact that this group is a finite extension
of $S_+$.

\bp 
\label{proprarag} {\bf (Prasad--Raghunathan) }
Let $\g s$ be a totally isotropic semisimple $p$-adic Lie algebra.
Then the group $S_+$ admits a universal topological central extension 
$$
1\longrightarrow Z_0 \longrightarrow {\wt S}_+\stackrel{\pi_0}{ \longrightarrow}
S_+ \longrightarrow 1 
$$
and its center $Z_0$ is a finite group.
\ep 
The word universal means that for all topological central extension
$$
1\longrightarrow Z_E \longrightarrow E\stackrel{\pi}{ \longrightarrow} 
S_+ \longrightarrow 1 
$$
where $E$ is a locally compact group and $Z_E$ is a closed central subgroup,
there exists a unique continuous morphism $\psi:{\wt S}_+\ra E$ 
such that $\pi_0=\pi\circ \psi$.

\begin{proof}
See \cite[Theorem 10.4]{PR84}.
\end{proof}

\begin{Rmq}
This result does not hold for real Lie groups: indeed, the center $Z_0$ of the universal cover
of ${\rm SL}(2,\m R)$ is isomorphic to $\m Z$.
\end{Rmq}

\bc
\label{corprarag} 
Let $\g s$ be a totally isotropic semisimple $p$-adic Lie algebra.
For every topological central extension
$$
1\longrightarrow Z_E \longrightarrow E\stackrel{\pi}{ \longrightarrow} 
S_+ \longrightarrow 1 
$$
with $E=\ol{[E,E]}$, the group $Z_E$ is finite.
\ec 

\begin{proof}[Proof of Corollary \ref{corprarag}.] Let $\psi:{\wt S}_+\ra E$
be the morphism given by the universal property.
Since, by Proposition \ref{proprarag}, the group $Z_0$ is finite, the projection
$\pi_0=\pi\circ \psi$ is a proper map, hence $\psi$ is also a proper map and the image $\psi({\wt S}_+)$ is a closed subgroup of $E$.
Since  $E=\psi({\wt S}_+)Z_E$,
one has the inclusion $[E,E]\subset \psi({\wt S}_+)$, and
the assumption $E=\ol{[E,E]}$ implies that the morphism $\psi$
is onto. Hence the group $Z_E=\psi(Z_0)$ is finite.
\end{proof}

\begin{Rmq}
For real Lie groups, the center $Z_E$ might even be non discrete. Such an example is given by 
the quotient $E$ of the product  $\m R\times\wt{{\rm SL}(2,\m R)}$ by a 
discrete subgroup of $\m R\times Z_0$ whose projection on $\m R$ is dense.
\end{Rmq}

\subsection{The Levi decomposition of $G_u$}
\label{seclevdec}
\bq
We prove in this section that in a weakly regular 
$p$-adic Lie group $G$, 
the subgroup $G_u$ is closed and admits a Levi decomposition.
\eq

Let $G$ be a weakly regular $p$-adic Lie group and 
$\g r$ be the solvable radical of $\g g$.
We recall that $\g g_{nc}$ is the smallest ideal of $\g g$ containing $\g r$
such that the group ${\rm Ad}_{\g g/\g g_{nc}}(G)$ is bounded.
Let $\g s$ be a Levi subalgebra of $\g g$ and $\g s_u:=\g s\cap \g g_{nc}$.

We recall that $G_u$ is the subgroup of $G$ spanned by all the one-parameter
subgroups of $G$, that $R_u$ is the subgroup of $G$ spanned by all the one-parameter
subgroups of $G$ tangent to $\g r$, and we define $S_u$ as 
the subgroup of $G$ spanned by all the one-parameter
subgroups of $G$ tangent to $\g s$.
Note that we don't know yet, but we will see it in the next proposition, 
that $S_u$ is indeed a closed subgroup
with Lie algebra equal to $\g s_u$.

\bp
\label{prolevdec}
Let $G$ be a weakly regular $p$-adic Lie group.\\
$a)$ The group $R_u$ is closed. It is the largest normal algebraic unipotent subgroup of $G$.\\
$b)$ The group $S_u$ is closed. Its  Lie algebra is $\g s_u$, and the morphism\\
${\rm Ad}_{\rm \g s_u}: S_u\ra ({\rm Aut}\,{\g s_u})_u$ is onto
and has finite kernel.\\
$c)$ The group $G_u$ is closed. One has $G_u=S_uR_u$ and $S_u\cap R_u=\{ e\}$.
\ep

\begin{Rmq} In a real Lie group, the group tangent to a Levi subalgebra is not necessary closed, as for example, if 
$G=(S\times \m T)/Z$ where $S$ is the universal cover of ${\rm SL}(2,\m R)$, $\m T=\m R/\m Z$ 
and $Z$ is the cyclic subgroup spanned by $(z_0,\al_0)$ 
with $z_0$ a generator of the center of $S$ and $\al_0$ an irrational element
of $\m T$. 
\end{Rmq}

\begin{proof}[Proof of Proposition \ref{prolevdec}]
$a)$ This follows from Proposition \ref{proruru}.

$b)$ Let $E:=\ol S_u$ be the closure 
of $S_u$ and $S_+:=({\rm Aut}\,{\g s_u})_u$. 
Note that this group $E$ normalizes $\g s_u$.
We want to apply Corollary \ref{corprarag} to the morphism 
$$
E\stackrel{\pi}{\longrightarrow} S_+ 
$$
where $\pi$ is the adjoint action $\pi:={\rm Ad}_{\rm \g s_u}$.

We first check that the assumptions of 
Corollary \ref{corprarag} are satisfied.
Since $E$ is weakly regular the kernel $Z_E$ 
of this morphism $\pi$ commutes with all the one-parameter 
subgroups tangent to $\g s_u$.
Hence $Z_E$ is equal to the center of $E$. 
Since $S_+$ is spanned by one-parameter subgroups,
and since by Proposition \ref{proxxsxr} any nilpotent element $X$ of $\g s_u$
is tangent to a one parameter subgroup $\ph$ of $E$, 
this morphism $\pi$ is surjective. 
Now,  by Jacobson Morozov Theorem, for any nilpotent element $X$ of $\g s_u$,
there exists an element 
$H$
in $\g s_u$ such that  $[H,X]=X$.
Let $\varphi:\mathbb Q_p\rightarrow G$ be the one-parameter subgroup tangent to $X$, which exists by Proposition \ref{proxxsxr}. 
Since $G$ is weakly regular, 
one has, for $t$ in $\m Q_p$ and 
$g_\eps:=\exp(\eps H)$ with $\eps$ small,
$$
g_\eps\ph(t)g_\eps^{-1}\ph(t)^{-1}=
\ph(e^{\eps}t)\ph(-t)=\ph((e^\eps-1)t).
$$
This proves that $\ph(\m Q_p)$  is included in 
the derived subgroup $[E,E]$. In particular one has $E=\ol{[E,E]}$.

According to Corollary \ref{corprarag}, the kernel $Z_E$ is finite.
In particular, one has $\dim E=\dim \g s_u$.
Since $\mathfrak s_u$ is totally isotropic, 
one can find a basis of $\g s_u$ all of whose elements are nilpotent. 
By Lemma \ref{lemxxsxr}, all these elements are in $\g g_G$. Hence,
by the implicit function theorem, the group $S_u$ is open in $E$.
Therefore, $S_u$ is also closed and $S_u=E$.

$c)$ Since the adjoint action of $R_u$ on $ \g g_{nc}/\g r$
is trivial, the intersection  
$S_u\cap R_u$ is included in the kernel $Z_E$
of the adjoint map ${\rm Ad}_{\g s_u}$. 
Since, by $b)$, this kernel is finite,
and since the algebraic unipotent group $R_u$
does not contain finite subgroups, one gets $S_u\cap R_u=\{ e\}$.

It remains to check that $S_uR_u$ is closed and that $G_u=S_uR_u$.
Thanks to Propositions \ref{proruru} and \ref{proxxsxr}, 
we can assume that $R_u=\{ e\}$. 
In this case, we know from point $b)$ that $S_u$ is closed 
and from Proposition \ref{proxxsxr} that $G_u=S_u$.
\end{proof}

Here are a few corollaries.
The first corollary is an improvement of Lemma \ref{lemngq}.

\bc
\label{corngq}
Let $G$ be a $p$-adic Lie group, $H$ a  normal 
weakly regular closed subgroup of $G$ such that $H=H_u$, and $G':=G/H$.\\
$a)$ One has the equality $G'_u=G_u/H$.\\
$b)$ Let $X$ in $\g g$ and $X'$ its image in $\g g'=\g g/\g h$.
One has the equivalence\\
\centerline{$X\in \g g_G \Longleftrightarrow
X'\in \g g'_{G'}$.}
$c)$ One has the equivalence\\
\centerline{$G$ is weakly regular $\Longleftrightarrow$
$G'$ is weakly regular.}
\ec 

\begin{Rmq}
The assumption that $H=H_u$ is important. For instance the 
group $G=\m Q_p$ and its normal subgroup $H=\m Z_p$ are 
weakly regular while the quotient $G/H$ is not weakly regular.
\end{Rmq}

\begin{proof}
We prove these three statements simultaneously.
Since $H=H_u$, according to Proposition \ref{prolevdec}, the group 
$H$ admits a Levi decomposition $H=SR$ 
where $R$ is a normal algebraic unipotent Lie subgroup 
and where $S$ is a Lie subgroup with finite center $Z$
whose Lie algebra is semisimple, totally isotropic, and
such that the adjoint map ${\rm Ad}_{\g s}:S\ra{\rm Aut}(\g s)_u$ 
is surjective.  
Note that $R$ is also a normal subgroup of $G$. 
Using Lemma \ref{lemngq}, we can assume that $R=\{ e\}$.

Let $C$ be the centralizer of $H=S$ in $G$. 
Since $H$ is normal in $G$, since $H=H_u$ and $H$ is weakly regular, $C$ is also 
the kernel of the adjoint action of $G$ on $\g h=\g s_u$.
Therefore, by Proposition \ref{prolevdec}, the image of the group morphism 
$$
H\times C\ra G;(h,c)\mapsto hc
$$ 
has finite index in $G$.
Its kernel is isomorphic to $H\cap C=Z$ and hence is finite.
When this morphism $H\times C\ra G$ is an isomorphism,
our three statements are clear.
The general case reduces to this one thanks to Lemma \ref{lemfinitelift} below.
\end{proof}

\begin{Lem} \label{lemfinitelift}
Let $G$ be a locally compact topological group, 
$Z$ be a finite central subgroup of $G$ 
and $\varphi:\mathbb Q_p\rightarrow G/Z$ 
be a continuous morphism.
Then $\varphi$ may be lifted as a continuous 
morphism $\widetilde{\varphi}:\mathbb Q_p\rightarrow G$.
\end{Lem}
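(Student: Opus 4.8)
The plan is to recast the lifting problem as the problem of splitting, by a continuous homomorphism, a central extension of $\m Q_p$ by a finite group, and then to split it by exploiting the divisibility of $\m Q_p$.

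First I would form the pullback group
$E:=\{(t,g)\in \m Q_p\times G\mid \varphi(t)=\pi(g)\}$, where $\pi:G\ra G/Z$ is the projection. It is a closed subgroup of $\m Q_p\times G$, and the first projection $q:E\ra\m Q_p$ is a continuous, open, surjective homomorphism whose kernel is $\{0\}\times Z$. Hence one gets a central extension
$1\ra Z\ra E\stackrel{q}{\longrightarrow}\m Q_p\ra 1$ with $Z$ finite. A continuous homomorphic section $s:\m Q_p\ra E$ of $q$ is exactly the same datum as a continuous homomorphism $\widetilde\varphi:\m Q_p\ra G$ with $\pi\circ\widetilde\varphi=\varphi$ (take $\widetilde\varphi$ to be the $G$-component of $s$). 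So it suffices to produce such a section.

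Next I would check that $E$ is abelian. Since $E/Z\simeq\m Q_p$ is abelian, one has $[E,E]\subset Z$, and the commutator of $E$ induces a continuous biadditive map $\m Q_p\times\m Q_p\ra Z$. Fixing the first variable yields a continuous homomorphism from $\m Q_p$ to the finite group $Z$; but $\m Q_p$ is divisible, hence has no proper subgroup of finite index, so every such homomorphism is trivial. Therefore this commutator map is identically zero and $E$ is abelian. Then, writing $E$ additively and setting $n:=|Z|$ (so that $nZ=0$), the continuous endomorphism $x\mapsto nx$ of $E$ kills $\ker q=Z$, hence descends through the quotient map $q$ to a continuous homomorphism $\beta:\m Q_p\ra E$ with $q\circ\beta$ equal to multiplication by $n$ on $\m Q_p$. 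Since $\m Q_p$ is a $\m Q$-vector space, multiplication by $n$ is a topological automorphism of $\m Q_p$; composing $\beta$ with its inverse $t\mapsto t/n$ gives a continuous homomorphism $s:\m Q_p\ra E$ with $q\circ s=\Id{\m Q_p}$, i.e. the desired section, and its $G$-component is the lift $\widetilde\varphi$.

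The one point requiring a little care is the step that forces $E$ to be abelian (so that multiplication by $n$ is a homomorphism of $E$); everything else is formal, and once commutativity is established the divisibility of $\m Q_p$ splits the extension at once. Note that local compactness of $G$ plays no role in this argument.
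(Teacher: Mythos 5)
Your proof is correct, but it follows a genuinely different route from the paper's. You recast the problem as splitting the pulled-back central extension $1\ra Z\ra E\ra \m Q_p\ra 1$, kill the commutator pairing $\m Q_p\times\m Q_p\ra Z$ by divisibility of $\m Q_p$ versus finiteness of $Z$, and then split the now-abelian extension by descending multiplication by $n=|Z|$ and composing with $t\mapsto t/n$; all the topological points you need (that $E$ is a closed subgroup, that $q$ is open so that $E/\ker q\simeq\m Q_p$ topologically and $\beta$ is continuous) do hold, the last because the pullback of the open surjection $\pi:G\ra G/Z$ is again open. The paper instead argues by hand: it passes to $H=\pi^{-1}(\varphi(\m Z_p))$, which is totally discontinuous, picks a compact open $U$ with $U\cap Z=\{e\}$ so that $\varphi(1)$ lifts to an element $g_0$ with $g_0^{p^k}\td{k}{\infty}e$, splits $Z$ into its $p$-torsion and prime-to-$p$ torsion parts, corrects the lifts $g_k$ of $\varphi(p^{-k})$ by unique prime-to-$p$ roots, and finishes by a pigeonhole argument extracting a subsequence along which the $p$-torsion defect is constant. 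Your argument is shorter and more conceptual (it amounts to showing that every topological central extension of $\m Q_p$ by a finite group splits, essentially because $\m Q_p$ is a $\m Q$-vector space), and it has the bonus of showing the lift is unique, since two lifts differ by a morphism $\m Q_p\ra Z$, which is trivial; the paper's construction is more elementary and stays entirely inside $G$, at the cost of the torsion decomposition and the subsequence extraction. Your closing remark that local compactness is not used is also accurate: only the Hausdorff property enters.
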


\begin{proof} Let $H$ be the inverse image of $\varphi(\mathbb Z_p)$ in $G$. 
Then $H$ is totally discontinuous. 
In particular it contains an open compact subgroup $U$ 
such that $U\cap Z=\{e\}$, so that $U$ maps injectively in $G/Z$. 
Let $\ell$ be an integer such that $\varphi(p^\ell\mathbb Z_p)\subset UZ/Z$. 
After rescaling, we can assume that $\ell=0$.
We let $g_0$ be the unique element of $U$ such that $\varphi(1)=g_0 Z$. 
Since $U$ map injectively in $G/Z$, we have $g_0^{p^k}\td{k}{\infty}e$. 

Let $X$ be the group of elements of $p$-torsion in $Z$ and 
$Y$ be the group of elements whose torsion is prime to $p$. 
For any $k\geq 0$ pick some 
$g_k$ in $G$ such that $\varphi(p^{\ell -k})=g_kZ$ and 
let $x_k$ and $y_k$ be the elements of $X$ and $Y$ such that 
$g_k^{p^k}=g_0 x_ky_k$. 
We let $z_k$ be the unique element of $Y_k$ such that $z_k^{p^k}=y_k$. 
Replacing $g_k$ by $g_k z_k^{-1}$, we can assume that $z_k=e$.
Since $x_k$ only takes finitely many values, we can find a $x$ in $X$ and 
an increasing sequence $(k_n)$ such that, for any $n$, 
$x_{k_n}=x$. Now, since $x$ is a central $p$-torsion element, 
one has $(g_0 x)^{p^k}\td{k}{\infty}e$. Since, for any $n$,
$g_{k_n}^{p^n}=g_0 x$, there exists a unique morphism 
$\widetilde{\varphi}:\mathbb Q_p\rightarrow G$ such that, for any $n$,
$\widetilde{\varphi}(p^{-k_n})=g_{k_n}$ and 
$\widetilde{\varphi}$ clearly lifts $\varphi$.
\end{proof}

The second corollary is an improvement of Proposition \ref{proxxsxr}.

\bc
\label{corxxsxr}
Let $G$ be a weakly regular $p$-adic Lie group. One has the equality~:
$\g g_G=\{ X\in \g g_u \mid {\rm ad} X$ is nilpotent$\}$,
where 
$\g g_u$ is the Lie algebra of $G_u$.
\ec 

\begin{proof} This follows from Proposition \ref{proxxsxr} since,
by Proposition \ref{prolevdec}, one has the equality $\g g_u=\g s_u\oplus\g r_u$.
\end{proof}

The last corollary tells us that a weakly regular $p$-adic Lie group 
$G$ with $G=G_u$ is ``almost'' an algebraic Lie group.

\bc
\label{corgualg}
Let $G$ be a weakly regular $p$-adic Lie group such that $G=G_u$. 
Then there exists a Lie group morphism $\ps:G\ra H$ with finite kernel
and cokernel where $H$ is the group of $\m Q_p$-points of a
linear  algebraic group  defined over $\m Q_p$. 
\ec 

\begin{proof} According to Proposition \ref{proxxsxr}, 
$G=G_u$ is a semidirect product 
$G_u=S_u\ltimes R_u$. 
We choose $H$ to be the semi direct product $H:=S'\ltimes R_u$
where $S'$ is the Zariski closure of the group ${\rm Ad}(S_u)$ in
${\rm Aut}(\g g_u)$. Note that, since $G$ is weakly regular, 
any automorphism of $\g g_u$ induces an automorphism of $R_u$.
We define the morphism $\ps:G_u\ra H$  by 
$\psi(g)=({\rm Ad}(s),r)$ for $g=sr$ with $s\in S_u$, $r\in R_u$.
Proposition \ref{proxxsxr} tells us also that this morphism $\psi$ has finite kernel and cokernel.
\end{proof}

\subsection{Regular semiconnected component}
\label{secregsemcom}
\bq
We are now ready to prove the following theorem which was the main motivation of our paper.
\eq

\bt
\label{thwearegreg}
Let $G$ be a weakly regular $p$-adic Lie group.
Then, there exists an open regular subgroup $G_\Om$ of $G$ 
which contains all the one-parameter subgroups of $G$.
\et

\begin{Rmq}
Let $\Om$ be a {\it standard} open subgroup of $G$.
We define
the {\it $\Om$-semiconnected component of $G$}
as its open subgroup
$G_\Om:=\Om G_{u}$ (see \cite{Ra3}). 
In this language, Theorem \ref{thwearegreg} states that, 
the $\Om$-semiconnected component of a weakly regular $p$-adic Lie group 
is regular, if the standard subgroup $\Om$ is small enough.
\end{Rmq}

\begin{proof}[Proof of Theorem \ref{thwearegreg}]
We will need some notations. 
Let $\g s$ be a Levi subalgebra of $\g g$,
$\g s_u:=\g s\cap \g g_{nc}$, $\g s'$ the centralizer of 
$\g s_u$ in $\g s$, $\g r$  the radical of $\g g$, and 
$\g r'$ the centralizer of $\g s_u$ in $\g r$.

We can choose a standard subgroup $\Om'_S$ of $G$ with Lie algebra $\g s'$,
and a standard subgroup $\Om'_R$ of $G$ with Lie algebra $\g r'$
such that $\Om'_S$ normalizes $\Om'_R$
and the semidirect product $\Om':=\Om'_S\Om'_R$ is a standard subgroup
of $G$ with Lie algebra $\g s'\oplus \g r'$.
Since $G$ is weakly regular, by shrinking $\Om'$, we can assume 
that it
commutes with $S_u$ and normalizes $R_u$. 

We claim that,  if $\Om'$ is 
small enough, the group 
$$
G_\Om:=\Om'G_u
$$ 
is an open regular subgroup of $G$.
\vs

{\bf First step}~: Openness. 
One has the equalities $\g s=\g s'\oplus \g s_u$ and, 
according to Proposition \ref{proruru}, $\g r=\g r'+\g r_u$. 
Hence $G_\Om$ is open in $G$.
\vs

{\bf Second step}~: {\rm Ad}-regularity. 
Let $J\subset G_\Om$ be the kernel of ${\rm Ad}_{\g g}$.
We want to prove that $J$ is  the center of $G_\Om$.
Since $J$ acts trivially on the quotient $\g g/\g r\simeq \g s'\oplus\g s_u$, 
by Proposition \ref{prolevdec}, one has the inclusion
$J\subset J':=Z_E\Om'_RR_u$ where 
$Z_E$ is a finite subgroup of $S_u$. One has $J_u=J\cap R_u$ and this group
is algebraic unipotent. Hence,  by Lemma \ref{lemngq}, 
the quotient $R_u/J_u$ is also algebraic unipotent. 
By Lemma \ref{lemunipro},  the group $R_u/J_u$ is closed in 
the group $J'/J$. This means that $R_uJ$ is closed in $J'$, hence 
that $J/J_u$ is closed in the compact group $J'/R_u$. In particular,
$J/J_u$ is compact. Therefore, 
there exists a compact set $K\subset J$ such that 
$$
J=KJ_u. 
$$ 
We can now prove that if $\Om'$ is small enough the 
group $J$ commutes with $G_\Om$. This is a consequence of the following three
facts.

$(i)$ Since $J$ is the kernel of ${\rm Ad}_{\g g}$ and $G$ is weakly regular,
$J$ commutes with $G_u$.

$(ii)$ Since $J_u$ is a subgroup of $R_u$ 
whose Lie algebra $\g j_u$ is included in the center of $\g g$,
if we choose $\Om'$ small enough, one has ${\rm Ad}_{\g j_u}(\Om')=\{ e\}$,
and the group $J_u$ commutes with $\Om'$.

$(iii)$ Since $K$ is compact and ${\rm Ad}_{\g g}(K)=\{ e\}$,
by Lemma \ref{lemexpmap}, if we choose $\Om'$ small enough,
the group $K$ commutes with $\Om'$. 
\vs

{\bf Third step}~: Size of finite subgroups. 
We want a uniform upper bound on the cardinality of the finite 
subgroups of $G_\Om$. This follows from the inclusions 
$R_u\subset G_u\subset G_\Om$ of normal subgroups and from
the following three facts.

$(i)$ Since  the group $R_u$ is algebraic unipotent, it does not contain finite groups.

$(ii)$ Since, by Proposition \ref{prolevdec},  the group $G_u/R_u$ is a finite
extension of a linear group, by  Example \ref{exalinreg}.$b$,
its finite subgroups have bounded cardinality. 

$(iii)$ Since  the group $G_\Om/G_u$ is a compact $p$-adic Lie group,
by Example \ref{exalinreg}.$a$,
its finite subgroups have bounded cardinality. 
\end{proof}

\subsection{Non weakly regular $p$-adic Lie groups}
\label{secnonregsem}
\bq
Not every $p$-adic Lie group is weakly regular. 
Here is a surprising example.
\eq
\begin{Ex}
\label{exagguhhu}
There exists a $p$-adic Lie group $G$ with $G=G_u$ 
which does not contain any open weakly regular subgroup
$H$ with $H=H_u$.
\end{Ex}

We will give the construction of such a group $G$
with Lie algebra $\g g={\g s\g l}(2,\m Q_p)$, 
but we will leave the verifications to the reader.

We recall that the group $G_0:={\rm SL}(2,\m Q_p)$
is an amalgamated product $G_0=K \star_{I_0}K$
where $K:={\rm SL}(2,\m Z_p)$ and 
$I_0:=\{ k\in K\mid k_{21}\equiv 0\; {\rm mod}\; p\}
$ is an
Iwahori subgroup of $K$. 
We define $G$ as the amalgamated product 
$G=K \star_{I}K$ where $I\subset I_0$ is an open subgroup
such that $I\neq I_0$.

The morphism $G\ra G_0$ is a non central extension. 
Using the construction in Lemma \ref{lemconstrunipo} 
one can check that $G$ is spanned by one-parameter subgroups.
However, one can check that the universal central extension 
$\wt G_0\ra G_0$ can not be lifted as a morphism $\wt G_0\ra G$.

\newpage



\begin{thebibliography}{1}


\bibitem{BQII}Y. Benoist, J.-F. Quint,
Stationary measures and
invariant subsets of homogeneous spaces (II),
{\em Journ. Am. Math. Soc.} {\bf 26} (2013) 659-734.

\bibitem{BQLS}
Y. Benoist, J.-F. Quint,
Lattices in $S$-adic Lie groups,
{\em Journal of Lie Theory}  {\bf 24} (2014) 179-197.

\bibitem{BoTi65}
A. Borel, J. Tits,
Groupes r\'eductifs,
{\em Publ. Math. IHES} {\bf 27} (1965) 55-150.

\bibitem{BoTi73}
A. Borel, J. Tits,
Homomorphismes ``abstraits'' de groupes alg\'{e}briques simples,
{\em Ann. of Math.} {\bf 97} (1973) 499-571.
	
\bibitem{Bou1}
N. Bourbaki,
Groupes et Alg\`ebres de Lie, chapitre 1,
{\em CCLS} (1971).

\bibitem{BT1} F. Bruhat, J. Tits
{\em Groupes r\'eductifs sur un corps local I},
Publ. IHES 41 (1972) p.5-252.

\bibitem{BT2} F. Bruhat, J. Tits
{\em Groupes r\'eductifs sur un corps local II},
Publ. IHES 60 (1984) p.5-184.

\bibitem{DSMS} J. Dixon, M. duSautoy, A. Mann, D. Segal,
Analytic Pro-p Groups,
{\em CUP}  (1991).

\bibitem{La65} S. Lang, Algebra, Addison-Wesley (1964). 

\bibitem{MaTo} G. Margulis, G. Tomanov,
Invariant measures for actions of unipotent groups over local fields 
on homogeneous spaces,
{\em Invent. Math.} {\bf 116}  (1994) 347-392.

\bibitem{PlRa94} V. Platonov, A.Rapinchuk,
Algebraic groups and number theory. Academic Press (1994). 

\bibitem{PR84} G. Prasad, M. Raghunathan,
Topological central extensions of semi-simple groups over local fields,
{\em Ann. of Math.} {\bf 119} (1984) 143-268.



\bibitem{Ra3} M. Ratner,
Raghunathan's  conjectures for cartesian products
of real and p-adic Lie groups,
{\em Duke Math. J.} {\bf 72}
(1995) 275-382.

\bibitem{Ser72} J.P. Serre, Arbres, amalgames, SL2,
{\em Asterisque} {\bf 46} (1972).

\bibitem{Tits79} J. Tits, Reductive groups over local fields, {\em
Pr. Sym. P. Math.} {\bf 33}
(1979) 22-69.



\end{thebibliography}
\end{document}